\renewcommand{\baselinestretch}{1.2}
\newcommand{\dated}{\mbox{} \hfill {\small [{\tt \today}]}} 
\theoremstyle{plain}
\newtheorem{theorem}{Theorem}[section]
\newtheorem{lemma}[theorem]{Lemma}
\newtheorem{corollary}[theorem]{Corollary}
\newtheorem{proposition}[theorem]{Proposition}
\theoremstyle{definition}
\newtheorem{definition}[theorem]{Definition}
\theoremstyle{remark}
\newtheorem*{remark}{Remark}
\newtheorem*{rems}{Remarks}
\newtheorem*{exs}{Examples}
\newenvironment{items}{\begin{enumerate}[\rm (i)]}{\end{enumerate}}
\newenvironment{alphitems}{\begin{enumerate}[\rm (a)]}{\end{enumerate}}
\newenvironment{keywords}{\noindent\small {\it Keywords\/}:}{\vskip 4pt}
\newenvironment{classification}{\noindent\small 2000 {\it Mathematics Subject
Classification\/}:}{\vskip 12pt}
\newcommand{\comps}{{\mathbb C}}
\newcommand{\ints}{{\mathbb Z}}
\newcommand{\posints}{{\mathbb N}}
\newcommand{\tensor}{\otimes}
\newcommand{\ttensor}{\tilde{\otimes}}
\newcommand{\Tensor}{\hat{\otimes}}
\newcommand{\wTensor}{\check{\otimes}}
\newcommand{\cstar}{{C^\ast}}
\newcommand{\id}{{\mathrm{id}}}
\newcommand{\Tr}{{\operatorname{Tr}}}
\newcommand{\A}{{\mathfrak A}}
\newcommand{\B}{{\mathfrak B}}
\newcommand{\Hilbert}{{\mathfrak H}}
\newcommand{\op}{{\mathrm{op}}}
\newcommand{\SL}{\operatorname{SL}}
\theoremstyle{plain}
\newtheorem*{taka}{Ozawa's Lemma}
\title{${\cal B}(\ell^p)$ is never amenable}
\author{\textit{Volker Runde}\thanks{Research supported by NSERC}}
\date{}
\begin{document}
\maketitle
\begin{abstract}
We show that, if $E$ is a Banach space with a basis satisfying a certain condition, then the Banach algebra $\ell^\infty({\cal K}(\ell^2 \oplus E))$ is not amenable; in particular, this is true for $E = \ell^p$ with $p \in (1,\infty)$. As a consequence, $\ell^\infty({\cal K}(E))$ is not amenable for any infinite-dimensional ${\cal L}^p$-space. This, in turn, entails the non-amenability of ${\cal B}(\ell^p(E))$ for any ${\cal L}^p$-space $E$, so that, in particular, ${\cal B}(\ell^p)$ and ${\cal B}(L^p[0,1])$ are not amenable.
\end{abstract}
\begin{keywords}
amenability; Kazhdan's property $(T)$; ${\cal L}^p$-spaces.
\end{keywords}
\begin{classification}
Primary 47L10; Secondary 46B07, 46B45, 46H20.
\end{classification}
\section*{Introduction}
The theory of amenable Banach algebras begins with B.\ E.\ Johnson's memoir \cite{Joh1}. The choice of terminology comes from \cite[Theorem 2.5]{Joh1}: a locally compact group is amenable (in the usual sense) if and only if its $L^1$-algebra satisfies a certain cohomological triviality condition, which is then used to define the class of amenable Banach algebras.
\par 
The notion of Banach algebraic amenability has turned out to be extremly fruitful ever since the publication of \cite{Joh1}. The class of amenable Banach algebras is small enough to allow for strong theorems of general nature, yet large enough to encompass a diverse collection of examples, such as the nuclear $\cstar$-algebras (see \cite[Chapter 6]{LoA} for an account), Banach algebras of compact operators on certain, well behaved Banach spaces (\cite{GJW}), and even radical Banach algebras (\cite{RunRad} and \cite{Rea0}).
\par 
The memoir \cite{Joh1} concludes with a list of suggestions for further research. One of them (\cite[10.4]{Joh1}) asks:
\begin{quote}
\textit{Is ${\cal B}(E)$---\emph{the Banach algebra of all bounded linear operators on a Banach space $E$}---ever amenable for any infinite-dimensional $E$?}
\end{quote}
\par
From a philosophical point of view, the answer to this question ought to be a clear ``no''. As for groups, amenability for Banach algebras can be viewed as a weak finiteness condition: amenable Banach algebras tend to be ``small''---wahatever that may mean precisely---, and ${\cal B}(E)$ simply feels too ``large'' to be amenable. It seems, however, as if Johnson's question has recently---somewhat surprisingly---found a positive answer: in \cite{AH}, S.\ A.\ Argyros and R.\ G.\ Haydon construct an infinite dimensional Banach space $E$ with few bounded linear operators, i.e., ${\cal B}(E) = {\cal K}(E) + \comps \, \id_E$ (with ${\cal K}(E)$ denoting the compact operators on $E$). As H.\ G.\ Dales pointed out to the author, $E$ has property $(\mathbb{A})$ introduced in \cite{GJW}, so that ${\cal K}(E)$ is an amenable Banach algebra for this space as is, consequently, ${\cal B}(E)$.
\par 
Still, infinite-dimensional Banach spaces $E$ with ${\cal B}(E)$ amenable ought to be the exception rather than the rule. Indeed, it follows from work by S.\ Wassermann (\cite{Was}) and the equivalence of amenability and nuclearity for $\cstar$-algebras that ${\cal B}(\ell^2)$ cannot be amenable. With $\ell^2$ being the ``best behaved'' of all $\ell^p$-spaces, one is led to expect that ${\cal B}(\ell^p)$ fails to be amenable for all $p \in [1,\infty]$. However, until recently it wasn't known for any $p \in [1,\infty]$ other than $2$ whether or not ${\cal B}(\ell^p)$ is amenable. The first to establish the non-amenability of ${\cal B}(\ell^p)$ for any $p \in [1,\infty] \setminus \{ 2 \}$ was C.\ J.\ Read in \cite{Rea}, where he showed that ${\cal B}(\ell^1)$ is not amenable. Subsequently, Read's proof was simplified by G.\ Pisier (\cite{Pis}). Eventually, N.\ Ozawa, simplified Pisier's argument even further and gave a proof that simultaneously establishes the non-amenability of ${\cal B}(\ell^p)$ for $p=1,2,\infty$ (\cite{Oza}).
\par 
In \cite{DR}, M.\ Daws and the author investigated the question of whether ${\cal B}(\ell^p)$ is amenable for $p \in (1,\infty) \setminus \{ 2 \}$. Among the results obtained in \cite{DR} are the following: If ${\cal B}(\ell^p)$ is amenable, then so are $\ell^\infty({\cal B}(\ell^p))$ and $\ell^\infty({\cal K}(\ell^p))$. The amenability of $\ell^\infty({\cal K}(\ell^p))$, in turn, forces $\ell^\infty({\cal K}(E))$ to be amenable for every infinite-dimensional ${\cal L}^p$-space in the sense of \cite{LP}; in particular, if ${\cal B}(\ell^p)$ is amenable, then so is $\ell^\infty({\cal K}(\ell^2 \oplus \ell^p))$.
\par 
This last implication is the starting point of this paper. Through a modification of Ozawa's approach from \cite{Oza}, we show that $\ell^\infty({\cal K}(\ell^2 \oplus E))$ is not amenable for certain Banach spaces $E$, including $E = \ell^p$ for all $p \in (1,\infty)$. As a consequence, ${\cal B}(\ell^p)$ cannot be amenable for such $p$ (and neither can ${\cal B}(L^p[0,1]))$. Together with the results from \cite{Rea} and \cite{Oza}, this proves that ${\cal B}(\ell^p)$ is not amenable for any $p \in [1,\infty]$.
\subsubsection*{Acknowledgment}
I am grateful to Gilles Pisier and Seytek Tabaldyev for pointing out to me that the original requirement of Theorem \ref{nonamthm} that $E$ have a shrinking basis was stronger than necessary. I owe special thanks to Seytek Tabaldyev also for discovering an error in an earlier version of the proof of that theorem.
\section{Amenable Banach algebras}
The original definition of an amenable Banach algebra from \cite{Joh1} is given in terms of first order Hochschild cohomology. An equivalent, but more intrinsic characterization---through approximate and virtual diagonals---was given soon thereafter in \cite{Joh2}. For the work done in this paper, however, yet another equivalent characterization of amenability due to A.\ Ya.\ Helemski\u{\i} turns out to be best suited (\cite[Theorem VII.2.20]{Hel}).
\par 
We denote the algebraic tensor product by $\tensor$ and use the symbol $\Tensor$ for the projective tensor product of Banach spaces. If $\A$ and $\B$ are Banach algebras, then so is $\A \Tensor \B$ in a canonical fashion. For any Banach algebra $\A$, we use $\A^\op$ for its \emph{opposite algebra}, i.e., the underlying Banach space is the same as for $\A$, but multiplication has been reversed. Multiplication in $\A$ induces a bounded linear map $\Delta \!: \A \Tensor \A \to \A$; it is immediate that $\ker \Delta$ is a left ideal in in $\A \Tensor \A^\op$.
\begin{definition} \label{amdef}
A Banach algebra $\A$ is said to be amenable if
\begin{alphitems}
\item $\A$ has a bounded approximate identity, and
\item the left ideal $\ker \Delta$ of $\A \Tensor \A^\op$ has a bounded right approximate identity.
\end{alphitems}
\end{definition}
\par 
Definition \ref{amdef} makes the proof of the following lemma, which we will require later on, particularly easy:
\begin{lemma} \label{amlem}
Let $\A$ be an amenable Banach algebra, and let $e \in \A$ be an idempotent. Then, for any $\epsilon > 0$ and any finite subset $F$ of $e\A e$, there are $a_1, b_1, \ldots, a_r, b_r \in \A$ such that
\begin{equation} \label{diag1}
  \sum_{k=1}^r a_k b_k = e
\end{equation}
and 
\begin{equation} \label{diag2}
  \left\| \sum_{k=1}^r x a_k \tensor b_k - a_k \tensor b_k x \right\|_{\A \Tensor \A} < \epsilon \qquad (x \in F).
\end{equation}
\end{lemma}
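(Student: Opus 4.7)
The plan is to exploit the bounded right approximate identity for $\ker \Delta$ guaranteed by Definition \ref{amdef}(b). Recall that $\A \Tensor \A^\op$ is an $\A$-bimodule under $a \cdot (b \otimes c) \cdot a' = ab \otimes ca'$, with respect to which $\Delta$ is an $\A$-bimodule map. The naive candidate $e \otimes e$ already satisfies $\Delta(e \otimes e) = e$, but its bimodule commutator with $x \in e \A e$ is not small; the idea is to subtract a suitable element of $\ker \Delta$ built from the approximate identity so as to damp out this commutator while preserving the value of $\Delta$.

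Let $(u_\lambda)$ be a bounded right approximate identity for $\ker \Delta$, and set
\[ d_\lambda := e \otimes e - (e \otimes e)\, u_\lambda. \]
Since $\ker \Delta$ is a left ideal of $\A \Tensor \A^\op$, the correction term lies in $\ker \Delta$, so $\Delta(d_\lambda) = e$. For $x \in e \A e$, the element $z_x := x \otimes e - e \otimes x$ belongs to $\ker \Delta$ because $xe = ex = x$. A short calculation---matching the algebra product $(x \otimes e - e \otimes x)(c \otimes d) = xc \otimes de - ec \otimes dx$ against the bimodule commutator of $(e \otimes e)(c \otimes d) = ec \otimes de$ with $x$---shows that
\[ x \cdot d_\lambda - d_\lambda \cdot x \;=\; z_x - z_x u_\lambda, \]
which converges to $0$ since $(u_\lambda)$ is a right approximate identity for $\ker \Delta$. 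Given $\epsilon > 0$ and a finite $F \subset e \A e$, one fixes $\lambda$ for which this commutator is uniformly smaller than $\epsilon/2$ on $F$.

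It remains to replace the projective tensor element $d_\lambda$ by a finite sum whose $\Delta$-image is still exactly $e$. Fixing a representation $u_\lambda = \sum_{i=1}^\infty c_i \otimes d_i$ with $\sum \|c_i\| \|d_i\| < \infty$, I would truncate to $\tilde w := \sum_{i=1}^n ec_i \otimes d_i e$, which approximates $(e \otimes e) u_\lambda$ arbitrarily well, so that the defect $s := \Delta(\tilde w) \in e \A e$ is arbitrarily small. Adding back the single rank-one term $s \otimes e$ produces the finite tensor
\[ \tilde d := e \otimes e - \tilde w + s \otimes e, \]
which satisfies $\Delta(\tilde d) = e - s + se = e$ (since $s \in e\A e$) and differs from $d_\lambda$ in bimodule commutator norm by at most $2 \max_{x \in F} \|x\| \cdot \|\tilde d - d_\lambda\|$; choosing $n$ large enough brings this below $\epsilon/2$, yielding both (\ref{diag1}) and (\ref{diag2}). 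I expect the main subtlety to lie in keeping careful track of the two different products on $\A \Tensor \A^\op$---its algebra multiplication and its $\A$-bimodule action on itself---when verifying the key identity $x \cdot d_\lambda - d_\lambda \cdot x = z_x - z_x u_\lambda$; the truncation and finite-correction step at the end is then a routine computation with projective tensor norms.
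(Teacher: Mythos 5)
Your argument is exactly the paper's: apply the bounded right approximate identity of $\ker\Delta$ to the elements $x\otimes e - e\otimes x$ and correct $e\otimes e$ by the resulting left-ideal term, the key identity $x\cdot d_\lambda - d_\lambda\cdot x = z_x - z_xu_\lambda$ being the computation the paper summarizes by saying (\ref{rai}) ``translates into'' (\ref{diag2}). Your final truncation step is a careful justification of what the paper dismisses with ``without loss of generality, $\boldsymbol{d}\in\A\otimes\A$'', and it is correct.
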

\begin{proof}
Let $\bullet$ denote the product in $\A \Tensor \A^\op$. Since $F \subset e \A e$, we have that $\{ x \tensor e - e \tensor x : x \in F \} \subset \ker \Delta$. By Definition \ref{amdef}(b), there is $\boldsymbol{r} \in \ker \Delta$ such that
\begin{equation} \label{rai}
  \| ( x \tensor e - e \tensor x) \bullet \boldsymbol{r} - (x \tensor e - e \tensor x) \| < \epsilon.
\end{equation}
\par 
Set $\boldsymbol{d} := e \tensor e - (e \tensor e) \bullet \boldsymbol{r}$. Without loss of generality, we can suppose that $\boldsymbol{d} \in \A \tensor \A$, i.e., there are $a_1, b_1, \ldots, a_r, b_r \in \A$ with $\boldsymbol{d} = \sum_{j=1}^r a_j \tensor b_j$. From the definition of $\boldsymbol{d}$, it is then immediate that (\ref{diag1}) holds while (\ref{rai}) translates into (\ref{diag2}).
\end{proof}
\begin{remark}
Since $\ker \Delta$ has \emph{bounded} right approximate identity, there is $C \geq 0$, depending only on $\A$, but not on $F$ or $\epsilon$, such that $a_1, b_1, \ldots, a_r , b_r \in \A$ satisfying (\ref{diag1}) and (\ref{diag2}) can be chosen such that $\left\| \sum_{k=1}^r a_k \tensor b_k \right\|_{\A \Tensor \A} \leq C$. We shall make no use of this, however.
\end{remark}
\section{Ozawa's proof revisited}
In \cite{Oza}, Ozawa presents a proof that simultaneously establishes the non-amenability of the Banach algebras ${\cal B}(\ell^p)$ for $p=1,2,\infty$ and $\text{$\ell^\infty$-}\bigoplus_{n=1}^\infty {\cal B}(\ell^p_N)$ for all $p \in [1,\infty]$. 
\par 
In this section, we recast the final step of his proof as a lemma, which doesn't make any reference to particular Banach algebras.
\par 
A pivotal r\^ole in Ozawa's argument is played by the fact that the group $\SL(3,\ints)$ has \emph{Kazhdan's property $(T)$} (\cite[Theorem 4.2.5]{BHV}) and thus, in particular, is finitely generated (\cite[Theorem 1.3.1]{BHV}) by $g_1, \ldots, g_m$, say. For the definition (and more) on Kazhdan's property $(T)$, we refer to \cite{BHV}.
\par 
What we require is the following consequence of $\SL(3,\ints)$ having $(T)$ (\cite[Theorem 3.1]{Oza}; compare also \cite[Proposition 1.1.9]{BHV}):
\begin{proposition} \label{T}
For any $g_1, \ldots, g_m$ generating $\SL(3,\ints)$, there is a constant $\kappa > 0$ such that, for any unitary representation $\pi$ of $\SL(3,\ints)$ on a Hilbert space $\Hilbert$ and any $\xi \in \Hilbert$, there is a $\pi$-invariant vector $\eta \in \Hilbert$, i.e., satisfying $\pi(g) \eta = \eta$ for all $g \in \SL(3,\ints)$, such that
\[
  \| \xi - \eta \| \leq \kappa \, \max_{j=1,\ldots,m} \| \pi(g_j)\xi-\xi\|.
\]
\end{proposition}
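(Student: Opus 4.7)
The plan is to deduce the proposition from the standard existence of a Kazhdan constant for property $(T)$, by splitting an arbitrary vector into its invariant and anti-invariant parts.

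First, I would invoke the defining consequence of property $(T)$: since $\SL(3,\ints)$ has $(T)$ and $Q = \{g_1,\ldots,g_m\}$ is a finite generating set, there exists a Kazhdan constant $\kappa_0 > 0$ for the pair $(\SL(3,\ints),Q)$ with the following property. For any unitary representation $\sigma$ of $\SL(3,\ints)$ on a Hilbert space $\Kilbert$ that has \emph{no} nonzero invariant vectors, every unit vector $\zeta \in \Kilbert$ satisfies $\max_{j} \|\sigma(g_j)\zeta - \zeta\| \geq \kappa_0$. By homogeneity, this says $\max_j \|\sigma(g_j)\zeta - \zeta\| \geq \kappa_0 \|\zeta\|$ for all $\zeta \in \Kilbert$.

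Next, given $\pi$ on $\Hilbert$, I would let $\Hilbert^\pi$ denote the closed subspace of $\pi$-invariant vectors. Its orthogonal complement $(\Hilbert^\pi)^\perp$ is $\pi$-invariant (because $\pi$ is unitary), and the restricted representation $\pi|_{(\Hilbert^\pi)^\perp}$ contains no nonzero invariant vector. Decompose the given $\xi$ as $\xi = \eta + \xi'$ with $\eta \in \Hilbert^\pi$ and $\xi' \in (\Hilbert^\pi)^\perp$. Then $\eta$ is $\pi$-invariant and $\|\xi - \eta\| = \|\xi'\|$.

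Finally, since $\pi(g_j)\eta = \eta$, the identity $\pi(g_j)\xi - \xi = \pi(g_j)\xi' - \xi'$ holds for every $j$, so applying the Kazhdan estimate to $\zeta = \xi'$ inside the subrepresentation on $(\Hilbert^\pi)^\perp$ yields
\[
  \kappa_0 \|\xi - \eta\| = \kappa_0 \|\xi'\| \leq \max_{j=1,\ldots,m} \|\pi(g_j)\xi' - \xi'\| = \max_{j=1,\ldots,m} \|\pi(g_j)\xi - \xi\|.
\]
Setting $\kappa := \kappa_0^{-1}$ gives the required bound.

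There is no serious obstacle: the only nontrivial ingredient is the existence of the Kazhdan constant $\kappa_0$, which is precisely the content of property $(T)$ for $\SL(3,\ints)$ with respect to the finite generating set $\{g_1,\ldots,g_m\}$, and this is exactly the input the proposition is allowed to assume (as indicated by the references to \cite{BHV}). Everything else is a one-line orthogonal decomposition.
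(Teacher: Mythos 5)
Your argument is correct and is precisely the standard one: the paper itself gives no proof of this proposition, deferring to \cite[Theorem 3.1]{Oza} and \cite[Proposition 1.1.9]{BHV}, and those sources argue exactly as you do, by applying the Kazhdan constant to the restriction of $\pi$ to the orthogonal complement of the invariant subspace and taking $\eta$ to be the orthogonal projection of $\xi$ onto $\Hilbert^\pi$. Nothing further is needed.
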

\par 
We briefly recall the setup laid out in \cite[Section 3]{Oza}, which we'll require both for the lemma at the end of this section and in the proof of Theorem \ref{nonamthm} below.
\par
Let $\mathbb P$ denote the set of all prime numbers, and let, for each $p \in \mathbb P$, the projective plane over the finite field $\ints / p \ints$ be denoted by $\Lambda_p$. Obviously, $\SL(3,\ints)$ acts on $\Lambda_p$ through matrix multiplication, which, in turn, induces a unitary representation $\pi_p$ of $\SL(3,\ints)$ on $\ell^2(\Lambda_p)$. The action of $\SL(3,\ints)$ on $\Lambda_p$ is $2$-transitive, i.e., the product action of $\SL(3,\ints)$ on $\Lambda_p \times \Lambda_p$ has exactly two orbits: the diagonal and its complement. Consequently, whenever $\boldsymbol{\eta} \in \ell^2(\Lambda_p \times \Lambda_p)$ is $\pi_p \tensor \pi_p$-invariant, there are  $\alpha, \beta \in \comps$ such that
\begin{equation} \label{invec1}
  \boldsymbol{\eta} = \alpha \boldsymbol{\zeta}_1 + \beta \boldsymbol{\zeta}_2
\end{equation}
with
\begin{equation} \label{invec2}
  \boldsymbol{\zeta}_1 = | \Lambda_p|^{-\frac{1}{2}} \sum_{\lambda \in \Lambda_p} e_\lambda \tensor e_\lambda
  \qquad\text{and} \qquad
  \boldsymbol{\zeta}_2 = | \Lambda_p |^{-1} \sum_{\lambda,\mu \in \Lambda_p} e_\lambda \tensor e_\mu.
\end{equation}
(Here, $e_\lambda$ for $\lambda \in \Lambda_p$ is the point mass at $\lambda$, as usual.) Finally, choose a subset $S_p$ of $\Lambda_p$ such that $|S_p| = \frac{|\Lambda_p| - 1}{2}$, and define an invertible isometry $v_p \in {\cal B}(\ell^2(\Lambda_p))$ through
\[
  v_p e_\lambda = \left\{ \begin{array}{rl} e_\lambda, & \lambda \in S_p, \\ -e_\lambda, & \lambda \notin S_p. \end{array} \right.
\]
With generators $g_1, \ldots, g_m$ of $\SL(3,\ints)$ fixed, we shall write $\pi_p(g_{m+1})$ instead of $v_p$ for notational convenience.
\par 
The following statement is proven (on \cite[pp.\ 561--562]{Oza}), albeit not explicitly stated in \cite[Section 5]{Oza}. For the reader's convenience, we include a proof.
\begin{taka}
It is impossible to find, for each $\epsilon > 0$, a number $r \in \posints$ with the following property: for each $p \in \mathbb P$, there are $\xi_{1,p}, \eta_{1,p}, \ldots, \xi_{r,p}, \eta_{r,p} \in \ell^2(\Lambda_p)$ such that $\sum_{k=1}^r \xi_{k,p} \tensor \eta_{k,p} \neq 0$ and
\begin{multline}
  \left\| \sum_{k=1}^r \xi_{j,p} \tensor \eta_{k,p} - (\pi_p(g_j) \tensor \pi_p(g_j))(\xi_{k,p} \tensor \eta_{k,p}) 
  \right\|_{\ell^2(\Lambda_p) \Tensor \ell^2(\Lambda_p)} \\ \leq \epsilon \left\| \sum_{k=1}^r \xi_{k,p} \tensor \eta_{k,p}
  \right\|_{\ell^2(\Lambda_p) \Tensor \ell^2(\Lambda_p)} \qquad (j=1, \ldots, m+1). \label{epsinv}
\end{multline}
\end{taka}
\begin{proof}
Assume towards a contradiction that, for each $\epsilon > 0$, there is $r \in \posints$ and, for each $p \in \mathbb P$, there are $\xi_{1,p}, \eta_{1,p}, \ldots, \xi_{r,p}, \eta_{r,p} \in \ell^2(\Lambda_p)$ with $\sum_{k=1}^r \xi_{k,p} \tensor \eta_{k,p} \neq 0$ such that (\ref{epsinv}) holds.
\par 
Suppose without loss of generality that $\sum_{k=1}^r \xi_{k,p} \tensor \eta_{k,p}$ has norm one in $\ell^2(\Lambda_p) \Tensor \ell^2(\Lambda_p)$, and let $\Phi \!: \ell^2 \Tensor \ell^2 \to \ell^2 \ttensor_2 \ell^2$ be the non-commutative Mazur map of \cite[Section 4]{Oza} (with $\ttensor_2$ denoting the Hilbert space tensor product). Then $\boldsymbol{\xi} := \Phi\left( \sum_{k=1}^r \xi_{k,p} \tensor \eta_{k,p} \right) \in \ell^2(\Lambda_p) \ttensor_2 \ell^2(\Lambda_2)$ is a unit vector satisfying
\[
  \| \boldsymbol{\xi} - (\pi_p(g_j) \tensor \pi_p(g_j))\boldsymbol{\xi} 
  \|_{\ell^2(\Lambda_p) \ttensor_2 \ell^2(\Lambda_2)} \leq \omega_\Phi(\epsilon) \qquad (j=1, \ldots, m+1).
\]
where $\omega_\Phi$ is the \emph{modulus of continuity} of $\Phi$ (\cite[p.\ 562]{Oza}). With $\kappa > 0$ as in Proposition \ref{T}, there is a $\pi_p \tensor \pi_p$-invariant vector $\boldsymbol{\eta} \in \ell^2(\Lambda_p) \ttensor_2 \ell^2(\Lambda_p)$ such that
\[
  \| \boldsymbol{\xi} - \boldsymbol{\eta} \|_{\ell^2(\Lambda_p) \ttensor_2 \ell^2(\Lambda_2)} \leq \kappa \, \omega_\Phi(\epsilon)
\]
which must be of the form (\ref{invec1}) with $\boldsymbol{\zeta}_1$ and $\boldsymbol{\zeta}_2$ as in (\ref{invec2}). From the definition of $\pi_p(g_{m+1})$, it follows that 
\[
  \begin{split}
  | \beta | & \leq | \beta | \left( 2 - 2 |\Lambda_p|^{-2} \right)^\frac{1}{2} \\
  & = | \beta | \| \boldsymbol{\zeta}_2 - 
    (\pi_p(g_{m+1}) \tensor \pi_p(g_{m+1}))\boldsymbol{\zeta}_2 \|_{\ell^2(\Lambda_p) \ttensor_2 \ell^2(\Lambda_2)} \\
  & = \| \boldsymbol{\eta} - 
    (\pi_p(g_{m+1}) \tensor \pi_p(g_{m+1}))\boldsymbol{\eta} \|_{\ell^2(\Lambda_p) \ttensor_2 \ell^2(\Lambda_2)} \\
  & \leq 2\kappa \, \omega_\Phi(\epsilon) + \| \boldsymbol{\xi} - 
    (\pi_p(g_{m+1}) \tensor \pi_p(g_{m+1}))\boldsymbol{\xi} \|_{\ell^2(\Lambda_p) \ttensor_2 \ell^2(\Lambda_2)} \\
  & \leq (2\kappa +1) \, \omega_\Phi(\epsilon)
  \end{split}
\]
and thus
\begin{equation} \label{666}
  \| \boldsymbol{\xi} - \alpha \boldsymbol{\zeta}_1 \|_{\ell^2(\Lambda_p) \ttensor_2 \ell^2(\Lambda_2)} 
  \leq \| \boldsymbol{\xi} - \boldsymbol{\eta} \|_{\ell^2(\Lambda_p) \ttensor_2 \ell^2(\Lambda_2)} + | \beta| 
  \leq (3\kappa +1) \, \omega_\Phi(\epsilon);
\end{equation}
in particular,
\begin{equation} \label{42}
  | \alpha | \geq 1 - (3\kappa +1) \, \omega_\Phi(\epsilon)
\end{equation}
holds.
\par 
On the other hand, we may view $\boldsymbol{\xi}$ as a Hilbert--Schmidt operator on $\ell^2(\Lambda_p)$ of rank at most $r$, so that
$| \Tr \, \boldsymbol{\xi} | \leq r^\frac{1}{2}$, where $\Tr$ is the canonical trace on ${\cal B}(\ell^2(\Lambda_p))$. The Cauchy--Schwarz inequality then yields
\[
  | \Lambda_p |^\frac{1}{2} \| \boldsymbol{\xi} - \alpha \boldsymbol{\zeta}_1 \|_{\ell^2(\Lambda_p) \ttensor_2 \ell^2(\Lambda_2)}
  \geq |\Tr (\alpha \boldsymbol{\zeta}_1 - \boldsymbol{\xi})| \geq | \alpha | | \Lambda_p|^\frac{1}{2} - r^\frac{1}{2}.
\]
Dividing by $|\Lambda_p|^\frac{1}{2}$ and taking (\ref{666}) and (\ref{42}) into account, we obtain
\[
  (3\kappa +1) \, \omega_\Phi(\epsilon) 
  \geq \| \boldsymbol{\xi} - \alpha \boldsymbol{\zeta}_1 \|_{\ell^2(\Lambda_p) \ttensor_2 \ell^2(\Lambda_2)}
  \geq 1 - (3\kappa +1) \, \omega_\Phi(\epsilon) - \left( \frac{r}{|\Lambda_p|} \right)^\frac{1}{2}
\]
and thus
\[
  2(3\kappa+1) \, \omega_\Phi(\epsilon) \geq 1 - \left( \frac{r}{|\Lambda_p|} \right)^\frac{1}{2}.
\]
\par 
Since $\Phi$ is uniformly continuous by \cite[Theorem 4.1]{Oza}, $\lim_{\epsilon \to 0} \omega_\Phi(\epsilon) = 0$ holds. Choosing $\epsilon > 0$ so small that $(3\kappa +1) \, \omega_\Phi(\epsilon) \leq \frac{1}{4}$, we get
\[
  \frac{1}{2} \geq 1 - \left( \frac{r}{|\Lambda_p|} \right)^\frac{1}{2}.
\]
for all $p \in \mathbb P$, which is impossible.
\end{proof}
\begin{remark}
It is crucial that $r \in \posints$ depend only on $\epsilon > 0$, but not on any particular $p$.
\end{remark}
\section{The non-amenability of $\ell^\infty({\cal K}(\ell^2 \oplus E))$}
It is possible to adapt the argument from \cite{Oza} to prove the non-amenability of $\ell^\infty({\cal K}(\ell^2))$. In this section, we shall go one step further: we shall see that, for certain Banach spaces $E$, the Banach algebra $\ell^\infty({\cal K}(\ell^2 \oplus E))$ is not amenable.
\par
Recall (from \cite{LT}, for instance) that a sequence $( x_n )_{n=1}^\infty$ in a Banach space $E$ is called a \emph{basis} for $E$ if, for each $x \in E$, there is a unique sequence $(\lambda_n)_{n=1}^\infty$ of scalars such that
\begin{equation} \label{basic}
  x= \sum_{n=1}^\infty \lambda_n x_n
\end{equation}
(with convergence in the norm topology of $E$). For instance, the canonical basis $( e_n )_{n=1}^\infty$ of $\ell^p$ is a basis in this sense if $p \in [1,\infty)$ (but not if $p = \infty$). For each $x \in E$ and $n \in \posints$, set $\langle x, x_n^\ast \rangle:= \lambda_n$, where $\lambda_n$ is the coefficient of $x_n$ in the expansion (\ref{basic}) of $x$. Then $( x_n^\ast )_{n=1}^\ast$ is a sequence in $E^\ast$. 
\par 
Let $E$ be a Banach space, and let ${\cal F}(E)$ denote the bounded finite rank operators on it. Identifying ${\cal F}(E)$ with the algebraic tensor product $E \tensor E^\ast$, we define the \emph{trace} on ${\cal F}(E)$ via
\[
  \Tr \!: {\cal F}(E) \to \comps, \quad x \tensor x^\ast \mapsto \langle x, x^\ast \rangle.
\]
\par 
We collect the following (mostly well known) facts for later use:
\begin{lemma} \label{tracelem}
Let $E$ be a Banach space. Then the following hold:
\begin{items}
\item $\Tr \, ST = \Tr \, TS \qquad (S \in {\cal F}(E), \, T \in {\cal B}(E))$;
\item if $F$ is another Banach space and $T = \left[ \begin{smallmatrix} T_{1,1} & T_{1,2} \\ T_{2,1} & T_{2,2} \end{smallmatrix} \right] \in {\cal F}(E \oplus F)$, then 
\[
  \Tr \, T = \Tr \, T_{1,1} + T_{2,2};
\]
\item if $E$ has a basis $(x_n )_{n=1}^\infty$, then 
\[
  \Tr \, T = \sum_{n=1}^\infty \langle T x_n, x_n^\ast \rangle \qquad (T \in {\cal F}(E)).
\]
\end{items}
\end{lemma}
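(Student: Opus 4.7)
My plan is to prove all three parts by reducing to rank-one operators and invoking the universal property of the tensor product: since $\Tr$ is defined on $E \tensor E^\ast$ via the bilinear map $(x,x^\ast) \mapsto \langle x, x^\ast \rangle$, it suffices in each case to verify the identity on elementary tensors $S = x \tensor x^\ast$, then extend by linearity.

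For (i), I would view the rank-one operator $x \tensor x^\ast$ as the map $y \mapsto \langle y, x^\ast \rangle x$. A direct computation yields $TS = Tx \tensor x^\ast$ and $ST = x \tensor T^\ast x^\ast$, whose traces are $\langle Tx, x^\ast \rangle$ and $\langle x, T^\ast x^\ast \rangle = \langle Tx, x^\ast \rangle$ respectively. For (ii), I would identify $(E \oplus F)^\ast$ with $E^\ast \oplus F^\ast$ under the pairing $\langle (x,y), (x^\ast,y^\ast) \rangle = \langle x, x^\ast \rangle + \langle y, y^\ast \rangle$. For a rank-one operator $(u,v) \tensor (u^\ast,v^\ast)$, the block $T_{1,1}$ acts on $E$ as $x \mapsto \langle x, u^\ast \rangle u$, i.e.\ $T_{1,1} = u \tensor u^\ast$, with an analogous formula for $T_{2,2}$. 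Adding the resulting traces gives $\langle u, u^\ast \rangle + \langle v, v^\ast \rangle$, which is exactly $\Tr T$ by definition.

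For (iii), on a rank-one operator $T = x \tensor x^\ast$ one has $\langle T x_n, x_n^\ast \rangle = \langle x_n, x^\ast \rangle \langle x, x_n^\ast \rangle$, so the right-hand side equals $\sum_n \langle x, x_n^\ast \rangle \langle x_n, x^\ast \rangle$. Applying the continuous functional $x^\ast$ to the norm-convergent expansion $x = \sum_n \langle x, x_n^\ast \rangle x_n$ (guaranteed by the definition of a basis) gives precisely $\langle x, x^\ast \rangle = \Tr T$. The only subtle point here, and the one I expect to be the main (minor) obstacle, is the legitimacy of the infinite sum; this is taken care of by the norm-continuity of $x^\ast$ together with the convergence of the basis expansion, and extension to general $T \in {\cal F}(E)$ is immediate by linearity since only finitely many terms of the form $x \tensor x^\ast$ appear. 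No uniform bound on the partial-sum projections (i.e.\ no Schauder basis constant) is needed, because we work one rank-one summand at a time.
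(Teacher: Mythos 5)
Your proposal is correct and follows essentially the same route as the paper: reduce to rank-one operators by linearity and, for (iii), apply the functional $x^\ast$ to the norm-convergent basis expansion of $x$, using continuity of $x^\ast$ to pass the sum through. The paper simply declares (i) and (ii) immediate from the definition, whereas you spell out the rank-one computations; the substance is identical.
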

\begin{proof}
(i) and (ii) are immediate from the definition (and well known).
\par 
For (iii), note that, by linearity, it suffices to check the claim for $T = x \tensor x^\ast$ with $x \in E$ and $x^\ast \in E^\ast$.
Since $( x_n )_{n=1}^\infty$ is a basis for $E$, there is a unique sequence $(\lambda_n)_{n=1}^\infty$ in $\comps$ such that
(\ref{basic}) holds. It follows that
\[
  \Tr \, T = \left\langle \sum_{n=1}^\infty \lambda_n x_n, x^\ast \right\rangle
           = \sum_{n=1}^\infty \lambda_n \langle x_n, x^\ast \rangle
           = \sum_{n=1}^\infty \langle x, x_n^\ast \rangle \langle x_n, x^\ast \rangle
           = \sum_{n=1}^\infty \langle T x_n, x_n^\ast \rangle
\]
as claimed.
\end{proof}
\begin{theorem} \label{nonamthm}
Let $E$ be a Banach space with a basis $( x_n )_{n=1}^\infty$ such that there is $C > 0$ with
\begin{equation} \label{condi}
  \sum_{n=1}^\infty \| S x_n \| \| T x_n^\ast \| \leq C \, N \| S \| \| T \|
  \qquad (N \in \posints, \, S \in {\cal B}(E,\ell^2_N), \, T \in {\cal B}(E^\ast, \ell^2_N)).
\end{equation}
Then $\ell^\infty({\cal K}(\ell^2 \oplus E))$ is not amenable.
\end{theorem}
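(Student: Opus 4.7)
The plan is to derive a contradiction with Ozawa's Lemma by combining Lemma \ref{amlem} with condition (\ref{condi}). For each prime $p$, I would fix an isometric embedding of $\ell^2(\Lambda_p)$ onto the span of the first $|\Lambda_p|$ standard basis vectors of $\ell^2$, let $P_p \in {\cal K}(\ell^2 \oplus E)$ be the corresponding orthogonal projection (zero on $E$), and set $e := (P_p)_p \in \A$, where $\A := \ell^\infty({\cal K}(\ell^2 \oplus E))$. For $j = 1,\ldots,m+1$, I would take $u_j := (u_{j,p})_p \in e\A e$ to extend $\pi_p(g_j)$ by zero; since $\pi_p$ is a permutation or signed-permutation representation in this basis, one has $\pi_p(g_j)^t = \pi_p(g_j^{-1})$, where $(\cdot)^t$ denotes matrix transpose in the standard basis of $\ell^2$. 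This direct approach is forced on us because in the Banach-algebra setting one cannot expect amenability to pass to corners such as $e \A e \cong \ell^\infty_p M_{|\Lambda_p|}(\comps)$ automatically.

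Supposing $\A$ amenable toward a contradiction, fix $\epsilon > 0$ and apply Lemma \ref{amlem} to $e$ and $F := \{u_1,\ldots,u_{m+1}\}$ to obtain $r \in \posints$ (depending only on $\epsilon$ and $F$) and $a_k, b_k \in \A$ ($k = 1,\ldots,r$) with $\sum_k a_k b_k = e$ and almost-diagonal defect at most $\epsilon$; by the Remark following Lemma \ref{amlem}, I may assume $K := \sum_k \|a_k\|\|b_k\|$ is bounded by a constant depending only on $\A$. For each $\lambda_0 \in \Lambda_p$ and each $k$, set
\[
  \xi_{k,p}^{(\lambda_0)} := P_p\, a_k^{(p)} e_{\lambda_0}, \qquad \eta_{k,p}^{(\lambda_0)} := P_p\, (b_k^{(p)})^t e_{\lambda_0},
\]
both in $\ell^2(\Lambda_p)$, and put $v_{\lambda_0} := \sum_{k=1}^r \xi_{k,p}^{(\lambda_0)} \tensor \eta_{k,p}^{(\lambda_0)}$. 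A bilinear computation, using $\pi_p(g_j)^t = \pi_p(g_j^{-1})$, shows that $a_k \mapsto u_j a_k$ and $b_k \mapsto b_k u_j$ implement $\pi_p(g_j) \tensor 1$ and $1 \tensor \pi_p(g_j^{-1})$ respectively on $v_{\lambda_0}$; rebracketing by $1 \tensor \pi_p(g_j)$ then translates the almost-diagonal condition into
\[
  \bigl\|\bigl(\pi_p(g_j) \tensor \pi_p(g_j) - \id\bigr)\, v_{\lambda_0}\bigr\|_{\ell^2(\Lambda_p) \Tensor \ell^2(\Lambda_p)} \leq \epsilon \qquad (j = 1, \ldots, m+1).
\]

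The main obstacle---and the place where condition (\ref{condi}) enters essentially---is a uniform-in-$p$ lower bound $\|v_{\lambda_0(p)}\|_{\ell^2(\Lambda_p) \Tensor \ell^2(\Lambda_p)} \geq c > 0$ for a suitable $\lambda_0(p) \in \Lambda_p$, so that the absolute defect above becomes the relative defect $\epsilon/c$ required by Ozawa's Lemma. Summing $v_{\lambda_0}$ over $\lambda_0 \in \Lambda_p$ yields, on $\ell^2(\Lambda_p)$,
\[
  \sum_{\lambda_0 \in \Lambda_p} v_{\lambda_0} = P_p\Bigl(\sum_{k=1}^r (a_k)_{1,1}^{(p)}\, P_p\, (b_k)_{1,1}^{(p)}\Bigr) P_p.
\]
Decomposing $\sum_k a_k^{(p)} b_k^{(p)} = P_p$ into blocks via Lemma \ref{tracelem}(ii) exposes an $E$-cross-term $\sum_k (a_k)_{1,2}^{(p)}(b_k)_{2,1}^{(p)}$; by Lemma \ref{tracelem}(iii), its trace on $\ell^2(\Lambda_p)$ is controlled by condition (\ref{condi}) applied with $S := P_p (a_k)_{1,2}^{(p)}$ and $T := P_p (b_k)_{2,1}^{(p)\,*}$, yielding a bound of order $C|\Lambda_p|\|a_k\|\|b_k\|$ per summand. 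A parallel Cauchy--Schwarz estimate controls the $Q_p := I - P_p$ leakage inside the $\ell^2$-block. Combined with $\Tr(P_p) = |\Lambda_p|$, a pigeonhole argument over $\lambda_0 \in \Lambda_p$ delivers the required uniform lower bound on $\|v_{\lambda_0(p)}\|$; choosing $\epsilon$ small enough then contradicts Ozawa's Lemma.
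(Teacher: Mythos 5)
Your overall architecture matches the paper's: the corner idempotent $e = (P_p)_p$, the set $F$ of lifted generators, Lemma \ref{amlem}, a family of rank-$\leq r$ tensors in $\ell^2(\Lambda_p) \Tensor \ell^2(\Lambda_p)$, a trace lower bound plus a pigeonhole, and finally Ozawa's Lemma. But the step where you produce the lower bound is set up in a way that cannot work, and it is exactly the point where the presence of the summand $E$ bites. Because you index your tensors only by $\lambda_0 \in \Lambda_p$, i.e.\ by vectors in the \emph{range} of $P_p$, the total operator you capture is $\sum_{\lambda_0} v_{\lambda_0} = \sum_k P_p a_k P_p b_k P_p$, with an extra $P_p$ inserted in the middle. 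This differs from $P_p \bigl( \sum_k a_k b_k \bigr) P_p = P_p$ by the leakage $\sum_k P_p a_k (1 - P_p) b_k P_p$, routed through the rest of $\ell^2$ and through $E$. You propose to bound this leakage using (\ref{condi}) and a Cauchy--Schwarz estimate, but the bounds you yourself state are of order $C\,|\Lambda_p|\,\|a_k\|\|b_k\|$ and $|\Lambda_p|\,\|a_k\|\|b_k\|$ --- the \emph{same} order as the main term $\Tr P_p = |\Lambda_p|$. Since $\sum_k \|a_k\|\|b_k\| \geq \|e\| = 1$ and nothing forces the products $a_k b_k$ to avoid factoring through $(1-P_p)(\ell^2 \oplus E)$, the quantity $\bigl| \Tr \sum_k P_p a_k P_p b_k P_p \bigr|$ admits no lower bound uniform in $p$, and the pigeonhole over $\lambda_0$ collapses.

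The paper's resolution is not to bound the leakage but to absorb it: the tensors $T_p(q,n)$ are indexed by the \emph{entire} basis of $\ell^2(\mathbb{P}, \ell^2 \oplus E)$, comprising both the vectors $e_q \tensor e_n$ and the vectors $e_q \tensor x_n$, and each $T_p(q,n)$ carries both an $\ell^2$-term and an $E$-term. Summing the resulting diagonal matrix coefficients over the whole basis then yields, via Lemma \ref{tracelem}(ii) and (iii), the \emph{full} trace $\Tr \bigl( \sum_k b_{k,p} P_p a_{k,p} \bigr)$ with no $P_p$ inserted in the middle, and the trace property converts this into $\Tr \bigl( P_p \sum_k a_{k,p} b_{k,p} \bigr) = \Tr P_p = |\Lambda_p|$ exactly. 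In that scheme condition (\ref{condi}) plays the opposite role from the one you assign it: it is needed for the \emph{upper} estimate (\ref{estim2}), guaranteeing that the total defect summed over the $E$-indexed part of the family is still only $O(|\Lambda_p|)$ times the almost-diagonal defect from Lemma \ref{amlem}, so that the pigeonhole goes through. (A minor further point: your remark that amenability need not pass to corners $e\A e$ is not the reason for working in $\A$ --- corners of amenable Banach algebras are amenable --- but this does not affect the argument.) As written, your proof has a genuine gap at the lower-bound step; repairing it leads you essentially to the paper's construction of $T_p(q,n)$.
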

\begin{proof}
We identify $\ell^\infty({\cal K}(\ell^2 \oplus E))$ with $\ell^\infty(\mathbb{P}, {\cal K}(\ell^2 \oplus E))$, which we will denote simply by $\A$. Each summand of $\A$ has a canonical block matrix structure
\begin{equation} \label{block}
  {\cal K}(\ell^2 \oplus E) = \begin{bmatrix} {\cal K}(\ell^2) & {\cal K}(E,\ell^2) \\
                              {\cal K}(\ell^2,E) & {\cal K}(E)
                              \end{bmatrix}.
\end{equation}
For each $p \in \mathbb P$, we embed ${\cal B}(\ell^2(\Lambda_p))$ into ${\cal K}(\ell^2 \oplus E)$ as upper left corner of the block matrix (\ref{block}). This allows us to consider $\text{$\ell^\infty$-}\bigoplus_{p \in \mathbb P} {\cal B}(\ell^2(\Lambda_p))$ s a closed subalgebra of $\A$. In particular, we can identify
\[
  F := \left\{ ( \pi_p(g_j) )_{p \in \mathbb P} : j =1, \ldots, m+1 \right\}
\]
with a finite subset of $\A$. Furthermore, we let $\A$ act (as block diagonal matrices) on the space
\[
  \ell^2(\mathbb{P}, \ell^2 \oplus E) \cong \ell^2(\mathbb{P},\ell^2) \oplus \ell^2(\mathbb{P},E)
\]
We shall identify $\ell^2(\mathbb{P},\ell^2)$ and $\ell^2(\mathbb{P},E)$ with completions of the algebraic tensor products
$\ell^2(\mathbb{P}) \tensor \ell^2$ and $\ell^2(\mathbb{P}) \tensor E$.
\par 
For any $p \in \mathbb P$, let $P_p \in {\cal B}(\ell^2)$ be the canonical projection onto the first $|\Lambda_p|$ coordinates of the $p^\mathrm{th}$ $\ell^2$-summand of $\ell^2(\mathbb{P},\ell^2) \oplus \ell^2(\mathbb{P},E)$. Let $a = ( a_p )_{p \in \mathbb P}$ and $b = ( b_p )_{p \in \mathbb P}$ be elements of $\A$. By \cite[Lemma 2.1(i)]{Oza}, we have
\begin{multline} \label{estim1}
  \sum_{q \in \mathbb P} \sum_{n=1}^\infty \| P_p a(e_q \tensor e_n) \| \| P_p^\ast b^\ast (e_q^\ast \tensor e_n^\ast) \|
  = \sum_{n=1}^\infty \| P_p a_p e_n \| \| P_p^\ast b_p^\ast e_n^\ast \| \\
  \leq |\Lambda_p| \| a_p \| \| b_p \| \leq |\Lambda_p| \| a \| \| b \|,
\end{multline}
and from (\ref{condi}), we conclude that
\begin{multline} \label{estim2}
  \sum_{q \in \mathbb P} \sum_{n=1}^\infty \| P_p a(e_q \tensor x_n) \| \| P_p^\ast b^\ast (e_q^\ast \tensor x_n^\ast) \| 
  = \sum_{n=1}^\infty \| P_p a_p x_n \| \| P_p^\ast b_p^\ast x_n^\ast \| \\
  \leq C \, |\Lambda_p| \| a_p \| \| b_p \| \leq C \, |\Lambda_p| \| a \| \| b \|,
\end{multline}
\par
Set $e = (P_p)_{p \in \mathbb P}$. Then $e$ is an idempotent in $\A$ with
\[
  e \A e =   \text{$\ell^\infty$-}\bigoplus_{p \in \mathbb P} {\cal B}(\ell^2(\Lambda_p)).
\]
\par 
Assume now towards a contradiction that $\ell^\infty(\mathbb{P}, {\cal K}(\ell^2 \oplus E))$ is amenable, and let $\epsilon > 0$ be arbitrary. By Lemma \ref{amlem} there are thus $a_1, b_1, \ldots, a_r , b_r \in \A$ such that (\ref{diag1}) holds and
\begin{equation} \label{diag3}
  \left\| \sum_{k=1}^r x a_k \tensor b_k - a_k \tensor b_k x \right\| < \frac{\epsilon}{(C+1)(m+1)} \qquad (x \in F)
\end{equation}
\par 
For $p,q \in \mathbb P$ and $n \in \posints$, define
\begin{multline*}
  T_p(q,n) := \sum_{k=1}^r P_p a_k (e_q \tensor e_n) \tensor P_p^\ast b_k^\ast (e_q^\ast \tensor e_n^\ast) +
                           P_p a_k (e_q \tensor x_n) \tensor P_p^\ast b_k^\ast (e_q^\ast \tensor x_n^\ast) \\
  \in \ell^2(\Lambda_p) \Tensor \ell^2(\Lambda_p).
\end{multline*}
For $j=1, \ldots, m+1$ and fixed $p \in \mathbb P$, we then have by (\ref{estim1}) and (\ref{estim2}):
\begin{multline*}
  \sum_{q \in \mathbb P} \sum_{n=1}^\infty \| T_p(q,n) - ((\pi_p(g_j) \tensor \pi_p(g_j))T_p(q,n) \| \\
  \leq (1 +C ) |\Lambda_p|
  \left\| \sum_{k=1}^r ( \pi_p (g_j) )_{p \in \mathbb P} a_k \tensor b_k - 
    a_k \tensor b_k ( \pi_p (g_j) )_{p \in \mathbb P} \right\| \leq \frac{\epsilon}{m+1} | \Lambda_p | 
\end{multline*}
and thus
\begin{equation} \label{ineq1}
  \sum_{q \in \mathbb P} \sum_{n=1}^\infty \sum_{j=1}^{m+1} \| T_p(q,n) - ((\pi_p(g_j) \tensor \pi_p(g_j))T_p(q,n) \| 
  \leq \epsilon |\Lambda_p|
\end{equation}
For $k=1, \ldots, r$, write $a_k = ( a_{k,q} )_{q \in \mathbb P}$ and $b_k = ( b_{k,q} )_{q \in \mathbb P}$, and note that
\begin{equation} \label{traceestim}
  \begin{split}
  \sum_{q \in \mathbb P} \sum_{n=1}^\infty \| T_p(q,n) \|  
  & \geq \sum_{n=1}^\infty\left|\sum_{k=1}^r \langle P_p a_{k,p} e_n, P_p^\ast b_{k,p}^\ast e_n^\ast \rangle +
                                             \langle P_p a_{k,p} x_n, P_p^\ast b_{k,p}^\ast x_n^\ast \rangle\right| \\
  & \geq\left| \sum_{n=1}^\infty \sum_{k=1}^r \langle P_p a_{k,p} e_n, P_p^\ast b_{k,p}^\ast e_n^\ast \rangle +
  \sum_{n=1}^\infty \sum_{k=1}^r \langle P_p a_{k,p} x_n, P_p^\ast b_{k,p}^\ast x_n^\ast \rangle\right| \\
  & = \left| \Tr \sum_{k=1}^r b_{k,p} P_p a_{k,p} \right|, \qquad\text{by Lemma \ref{tracelem}(ii) and (iii)}, \\
  & = \left| \Tr \sum_{k=1}^r P_p a_{k,p} b_{k,p} \right|, \qquad\text{by Lemma \ref{tracelem}(i)}, \\
  & = | \Tr \, P_p |, \qquad\text{because $\sum_{k=1}^r a_{k,p} b_{k,p} = P_p$}, \\
  & = | \Lambda_p |.
  \end{split}
\end{equation}
Of course, the inequalities (\ref{ineq1}) and (\ref{traceestim}) remain intact if we only take the sum over those $q \in \mathbb P$ and $n \in \posints$ such that $T_p(q,n) \neq 0$. Both inequalties together then imply that there is $q \in \mathbb P$ and $n \in \posints$ with $T_p(q,n) \neq 0$ such that
\[
  \sum_{j=1}^{m+1} \| T_p(q,n) - ((\pi_p(g_j) \tensor \pi_p(g_j))T_p(q,n) \|  \leq \epsilon \| T_p(q,n) \|
\]
and thus
\begin{equation} \label{ineq2}
  \| T_p(q,n) - ((\pi_p(g_j) \tensor \pi_p(g_j))T_p(q,n) \| \leq \epsilon \| T_p(q,n) \| \qquad  (j=1, \ldots, m+1).
\end{equation}
From the definition of $T_p(q,n)$ it is clear that there are $\xi_{1,p}, \eta_{1,p}, \ldots, \xi_{2r,p}, \eta_{2r,p} \in \ell^2(\Lambda_p)$ with $T_p(q,n) = \sum_{k=1}^{2r} \xi_{k,p} \tensor \eta_{k,p}$. In view of (\ref{ineq2}), we thus have $\xi_{1,p}, \eta_{1,p}, \ldots, \xi_{2r,p}, \eta_{2r,p} \in \ell^2(\Lambda_p)$ with $\sum_{k=1}^{2r} \xi_{k,p} \tensor \eta_{k,p} \neq 0$ such that 
\begin{multline*}
  \left\| \sum_{k=1}^{2r} \xi_{j,p} \tensor \eta_{k,p} - (\pi_p(g_j) \tensor \pi_p(g_j))(\xi_{k,p} \tensor \eta_{k,p}) 
  \right\|_{\ell^2(\Lambda_p) \Tensor \ell^2(\Lambda_p)} \\ \leq \epsilon \left\| \sum_{k=1}^{2r} \xi_{k,p} \tensor \eta_{k,p}
  \right\|_{\ell^2(\Lambda_p) \Tensor \ell^2(\Lambda_p)} \qquad (j=1, \ldots, m+1).
\end{multline*}
Since $p \in \mathbb P$ was arbitrary, this contradicts Ozawa's lemma.
\end{proof}
\section{The non-amenability of ${\cal B}(\ell^p(E))$}
With Theorem \ref{nonamthm} proven, we shall now see that the condition imposed on the basis of $E$ in that theorem is automatically satisfied by the canonical basis $(e_n)_{n=1}^\infty$ of $\ell^p$ with $p \in (1,\infty)$. (As is customary, we set $p' := \frac{p}{p-1}$, so that $(\ell^p)^\ast = \ell^{p'}$.).
\par 
Let $p \in [1,\infty)$, and let $E$ and $F$ be Banach spaces. Recall (from \cite{DJT}, for instance) that a linear operator $T \!: E \to F$ is called \emph{$p$-summing} if the amplification $\id_{\ell^p} \tensor T : \ell^p \tensor E \to \ell^p \tensor F$ extends to a bounded linear operator from $\ell^p \wTensor E$---$\wTensor$ stands for the injective Banach space tensor product---to $\ell^p(F)$. In this case, the operator norm of $\id_{\ell^p} \tensor T \!: \ell^p \wTensor E \to \ell^p(F)$ is called the \emph{$p$-summing norm} of $T$ and denoted by $\pi_p(T)$.
\begin{lemma} \label{estl}
Let $p \in (1,\infty)$. Then there is $C > 0$ such that
\[
  \sum_{n=1}^\infty \| S e_n \|_{\ell^2_N} \| T e^\ast_n \|_{\ell^2_N} \leq C \, N \| S \| \| T \| 
  \qquad (N \in \posints, \, S \in {\cal B}(\ell^p, \ell^2_N), \, T \in {\cal B}(\ell^{p'}, \ell^2_N)).
\]
\end{lemma}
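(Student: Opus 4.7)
The plan is to prove the estimate directly, using Khintchine's inequality together with the duality $(\ell^p)^\ast = \ell^{p'}$, without invoking the $p$-summing machinery developed just above the lemma. The driving observation is that Khintchine replaces the Euclidean norm $\|Se_n\|_{\ell^2_N}$ by the mean absolute value of a random signed combination of its coordinates---a quantity which is \emph{linear} in $Se_n$ and therefore open to duality arguments.

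Fix two independent Rademacher families $\epsilon_1,\ldots,\epsilon_N$ and $\eta_1,\ldots,\eta_N$, and write $\tilde\epsilon,\tilde\eta\in\ell^2_N$ for the associated sign vectors, each of $\ell^2_N$-norm $N^{1/2}$. Under the duality pairing $\langle\ell^p,\ell^{p'}\rangle$ one has
\[
  \sum_{j=1}^N \epsilon_j(Se_n)_j \;=\; \langle Se_n,\tilde\epsilon\rangle_{\ell^2_N} \;=\; (S^\ast\tilde\epsilon)_n,
\]
and analogously for $T$ with $\eta$, giving $S^\ast\tilde\epsilon\in\ell^{p'}$ and $T^\ast\tilde\eta\in\ell^p$. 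Khintchine's inequality, with some absolute constant $K$, then yields
\[
  \|Se_n\|_{\ell^2_N}\le K\,\mathbb{E}_\epsilon\bigl|(S^\ast\tilde\epsilon)_n\bigr|,
  \qquad
  \|Te_n^\ast\|_{\ell^2_N}\le K\,\mathbb{E}_\eta\bigl|(T^\ast\tilde\eta)_n\bigr|.
\]
Multiply the two inequalities, use independence of $\epsilon$ and $\eta$ to convert the product of expectations into a joint expectation, sum on $n$ and interchange with the expectation (Tonelli, all terms non-negative), and apply Hölder's inequality with the conjugate pair $(p',p)$:
\[
  \sum_{n=1}^\infty\|Se_n\|_{\ell^2_N}\|Te_n^\ast\|_{\ell^2_N}
  \;\le\; K^2\,\mathbb{E}_{\epsilon,\eta}\bigl[\|S^\ast\tilde\epsilon\|_{\ell^{p'}}\|T^\ast\tilde\eta\|_{\ell^p}\bigr]
  \;\le\; K^2\,N\,\|S\|\,\|T\|,
\]
the last step using $\|S^\ast\tilde\epsilon\|_{\ell^{p'}}\le\|S\|\,\|\tilde\epsilon\|_{\ell^2_N}=\|S\|N^{1/2}$ and the analogous bound for $T$. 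Taking $C:=K^2$ completes the proof.

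There is no serious obstacle: Khintchine holds with an absolute constant over both $\reals$ and $\comps$ (the complex case reduces to real and imaginary parts), the swap of sum and expectation is legitimate because every term is non-negative, and Hölder is textbook. Notice that the constant $C$ thereby obtained depends on neither $p\in(1,\infty)$ nor $N$, which is exactly the uniformity required for the subsequent application of Theorem~\ref{nonamthm} to $E=\ell^p$.
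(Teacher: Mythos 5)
Your proof is correct, and it takes a genuinely different route from the one in the paper. The paper first applies H\"older's inequality to split the bilinear sum into the product $\| S \|_{\ell^{p'}(\ell^2_N)} \| T \|_{\ell^{p}(\ell^2_N)}$, identifies ${\cal B}(\ell^p,\ell^2_N)$ with $\ell^{p'} \wTensor \ell^2_N$ and ${\cal B}(\ell^{p'},\ell^2_N)$ with $\ell^p \wTensor \ell^2_N$, and then invokes Gordon's theorem on the summing norms of the identity of $\ell^2_N$, namely $\pi_{p'}(\id_{\ell^2_N}) \sim N^{\frac{1}{2}} \sim \pi_p(\id_{\ell^2_N})$, to control each factor by a constant times $\sqrt{N}\,\|S\|$ resp.\ $\sqrt{N}\,\|T\|$. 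You instead linearize the Euclidean norms $\|Se_n\|$ and $\|Te_n^\ast\|$ via the lower Khintchine inequality in $L^1$, turning them into Rademacher averages of $(S^\ast\tilde\epsilon)_n = \langle Se_n,\tilde\epsilon\rangle$ and $(T^\ast\tilde\eta)_n = \langle Te_n^\ast,\tilde\eta\rangle$; independence of the two Rademacher families, Tonelli, H\"older in $n$ with the conjugate pair $(p',p)$, and the bounds $\|S^\ast\tilde\epsilon\|_{\ell^{p'}} \leq \|S\| N^{\frac{1}{2}}$, $\|T^\ast\tilde\eta\|_{\ell^{p}} \leq \|T\| N^{\frac{1}{2}}$ then finish the job. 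Every step checks out, including the complex-scalar form of Khintchine and the duality identifications (which use $1 < p < \infty$, exactly the stated range). What your argument buys is self-containedness---only Khintchine and H\"older, no appeal to Gordon or to the $p$-summing machinery set up before the lemma---and a constant $C = K^2$ that is absolute, independent of $p$ as well as of $N$; the paper's constants $C_p, C_{p'}$ a priori depend on $p$, though this costs nothing in the application, where $p$ is fixed. What the paper's route buys is modularity: it isolates the quantitative input as the citable classical fact $\pi_q(\id_{\ell^2_N}) \lesssim \sqrt{N}$, whereas your Khintchine computation replaces that input by a direct elementary estimate of the bilinear sum itself.
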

\begin{proof}
We can, for each $N \in \posints$, algebraically identify ${\cal B}(\ell^p, \ell^2_N)$ and ${\cal B}(\ell^{p'}, \ell^2_N)$ with the algebraic tensor products $\ell^{p'} \tensor \ell^2_N$ and $\ell^p \tensor \ell^2_N$, respectively. On the other hand, those tensor products can also be identified with the spaces $\ell^{p'}(\ell^2_N)$ and $\ell^p(\ell^2_N)$, respectively. H\"olders inequality yields immediately that
\[
  \sum_{n=1}^\infty \| S e_n \|_{\ell^2_N} \| T e^\ast_n \|_{\ell^2_N} \leq \| S \|_{\ell^{p'}(\ell^2_N)} \| T \|_{\ell^{p}(\ell^2_N)}
  \qquad (N \in \posints, \, S \in {\cal B}(\ell^p, \ell^2_N), \, T \in {\cal B}(\ell^{p'}, \ell^2_N)).
\]
\par 
With $\wTensor$ denoting the injective tensor product, we have the isometric identifications
\[
  {\cal B}(\ell^p, \ell^2_N) \cong \ell^{p'} \wTensor \ell^2_N 
  \qquad\text{and}\qquad
  {\cal B}(\ell^{p'}, \ell^2_N) \cong \ell^p \wTensor \ell^2_N,
\]
so that
\begin{multline*}
  \| S \|_{\ell^{p'}(\ell^2_N)} \leq \pi_{p'}(\id_{\ell^2_N}) \| S \|
  \quad\text{and}\quad
  \| T \|_{\ell^{p}(\ell^2_N)} \leq \pi_p(\id_{\ell^2_N}) \| T \| \\
  \qquad (N \in \posints, \, S \in {\cal B}(\ell^p, \ell^2_N), \, T \in {\cal B}(\ell^{p'}, \ell^2_N)).
\end{multline*}
\par
By \cite[Theorem 5]{Gor} (compare also \cite[Theorem 10.3]{TJ})
\[
  \pi_{p'}(\id_{\ell^2_N}) \sim N^\frac{1}{2} \sim \pi_p(\id_{\ell^2_N})
\]
holds; in particular, there are $C_{p'}, C_p > 0$ with
\[
  \pi_{p'}(\id_{\ell^2_N}) \leq C_{p'} \sqrt{N} \quad\text{and}\quad \pi_p(\id_{\ell^2_N}) \leq C_p \sqrt{N} 
  \qquad (N \in \posints).
\]
Letting $C := C_{p'} C_p$ yields the desired constant.
\end{proof}
\par 
By Lemma \ref{estl}, $\ell^p$ therefore satisfies the hypothesis of Theorem \ref{nonamthm} for any $p \in (1,\infty)$, so that $\ell^\infty({\cal K}(\ell^2 \oplus \ell^p))$ is not amenable, but more is true. 
\par 
Recall the definition of an ${\cal L}^p$-space from \cite{LP}:
\begin{definition}
Let $p \in [1,\infty]$. 
\begin{alphitems}
\item A Banach space $E$ is called an \emph{${\cal L}^p_\lambda$-space} with $\lambda \geq 1$ if, for any finite-dimensional subspace $X$ of $E$, there are $N \in \posints$ and an $N$-dimensional subspace $Y$ of $E$ containing $X$ such that $d(Y,\ell^p_N) \leq \lambda$, where $d$ is the Banach--Mazur distance. 
\item We call $E$ an \emph{${\cal L}^p$-space} if it is an ${\cal L}^p_\lambda$-space for some $\lambda \geq 1$. 
\end{alphitems}
\end{definition}
\par 
All $L^p$-spaces are ${\cal L}^p$-spaces, but the converse fails: For $p \in (1,\infty) \setminus \{ 2 \}$, the space $\ell^2 \oplus \ell^p$ is an ${\cal L}^p$-space, but not an $L^p$-space (\cite[Example 8.2]{LP}). By \cite[Theorem 4.3]{DR}, all infinite-dimensional ${\cal L}^p$-spaces $E$ are equivalent when it comes to the question of whether $\ell^\infty({\cal K}(E))$ is amenable: either $\ell^\infty({\cal K}(E))$ is amenable for all infinite-dimensional ${\cal L}^p$-spaces $E$ or it is non-amenable for all such spaces.
\par 
In view of Theorem \ref{nonamthm} and Lemma \ref{estl}, we thus have:
\begin{proposition} \label{Lpprop}
Let $p \in (1,\infty)$, and let $E$ be an infinite-dimensional ${\cal L}^p$-space. Then $\ell^\infty({\cal K}(E))$ is not amenable.
\end{proposition}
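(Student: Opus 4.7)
The plan is that this proposition is a clean corollary of three ingredients that are already in place: Theorem~\ref{nonamthm}, Lemma~\ref{estl}, and the dichotomy for ${\cal L}^p$-spaces from \cite[Theorem~4.3]{DR}. No new technical machinery is needed; the whole proof amounts to stringing these together and naming one specific ${\cal L}^p$-space to which the theorem applies.

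First I would verify that the canonical basis $(e_n)_{n=1}^\infty$ of $\ell^p$ satisfies condition~(\ref{condi}) of Theorem~\ref{nonamthm}. But this is precisely the content of Lemma~\ref{estl}, since the dual basis is $(e_n^\ast)_{n=1}^\infty$ in $\ell^{p'}$. Consequently Theorem~\ref{nonamthm} applies with $E = \ell^p$ and shows that the Banach algebra $\ell^\infty({\cal K}(\ell^2 \oplus \ell^p))$ fails to be amenable.

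Next I would observe that the particular space $F := \ell^2 \oplus \ell^p$ used in the previous step is itself an infinite-dimensional ${\cal L}^p$-space: for $p \in (1,\infty)\setminus\{2\}$ this is the content of \cite[Example~8.2]{LP} (already cited in the excerpt), while for $p=2$ one simply has $\ell^2 \oplus \ell^2 \cong \ell^2$, which is trivially an ${\cal L}^2$-space. Thus the non-amenability just established witnesses at least one infinite-dimensional ${\cal L}^p$-space for which $\ell^\infty({\cal K}(F))$ is not amenable.

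Finally I would invoke the dichotomy \cite[Theorem~4.3]{DR}, recalled in the paragraph preceding the proposition: amenability of $\ell^\infty({\cal K}(E))$ either holds for every infinite-dimensional ${\cal L}^p$-space $E$ or for none. Since it has just been ruled out for $F = \ell^2 \oplus \ell^p$, it must fail for every infinite-dimensional ${\cal L}^p$-space $E$, which is the conclusion of the proposition. There is really no obstacle in this argument, since all the hard work -- the modification of Ozawa's method and the Gordon--Lewis style estimate for the $p$-summing norm of $\id_{\ell^2_N}$ -- has already been carried out in Theorem~\ref{nonamthm} and Lemma~\ref{estl}.
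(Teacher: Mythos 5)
Your proposal is correct and is essentially the argument the paper intends: apply Lemma \ref{estl} to see that $\ell^p$ satisfies condition (\ref{condi}), deduce from Theorem \ref{nonamthm} that $\ell^\infty({\cal K}(\ell^2\oplus\ell^p))$ is not amenable, note that $\ell^2\oplus\ell^p$ is an infinite-dimensional ${\cal L}^p$-space, and transfer via the dichotomy of \cite[Theorem 4.3]{DR}. Your explicit treatment of the case $p=2$ is a small but welcome addition to what the paper leaves implicit.
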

\par 
We can finally deduce the main result of this paper:
\begin{theorem}
Let $p \in (1,\infty)$, and let $E$ be an ${\cal L}^p$-space. Then ${\cal B}(\ell^p(E))$ is not amenable.
\end{theorem}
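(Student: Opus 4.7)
The plan is to derive the theorem from Proposition \ref{Lpprop} by extending the reduction chain that drives the introduction. Two simple structural observations provide the bridge. First, $\ell^p(E)$ is itself an infinite-dimensional ${\cal L}^p$-space, since the class of ${\cal L}^p$-spaces in the sense of \cite{LP} is closed under countable $\ell^p$-direct sums: given a finite-dimensional subspace $X \subset \ell^p(E)$, first approximate $X$ by compressing into finitely many coordinates and then enlarge coordinate-wise using the ${\cal L}^p$-property of $E$. Second, one has the self-similarity $\ell^p(E) \cong \ell^p(\ell^p(E))$, coming from the canonical identification $\ell^p(\posints, \ell^p(\posints, E)) \cong \ell^p(\posints \times \posints, E) \cong \ell^p(\posints, E)$; in particular ${\cal B}(\ell^p(E)) \cong {\cal B}(\ell^p(\ell^p(E)))$.

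Assuming toward a contradiction that ${\cal B}(\ell^p(E))$ is amenable, I would then invoke the implication from \cite{DR} recalled in the introduction, which says that amenability of ${\cal B}(\ell^p(F))$ forces $\ell^\infty({\cal K}(F))$ to be amenable for any Banach space $F$ to which the diagonal-embedding argument applies. Applying this with $F := \ell^p(E)$ and using the self-similarity above, one concludes that $\ell^\infty({\cal K}(\ell^p(E)))$ is amenable. But $\ell^p(E)$ is an infinite-dimensional ${\cal L}^p$-space, so Proposition \ref{Lpprop} states the opposite: $\ell^\infty({\cal K}(\ell^p(E)))$ is not amenable. This contradiction proves the theorem.

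The main obstacle, modest though it is, lies in the middle step: verifying that the argument of \cite{DR} used for the scalar case $F = \ell^p$ genuinely depends only on the identity $F \cong \ell^p(F)$ and not on any specific feature of scalar $\ell^p$. This is essentially a bookkeeping exercise, since the diagonal embedding of $\ell^\infty({\cal K}(F))$ into ${\cal B}(\ell^p(F))$ and the conditional-expectation-type retraction that transfers amenability are formal consequences of the $\ell^p$-sum structure. Once that transfer is in hand, the theorem follows by a single application of Proposition \ref{Lpprop}.
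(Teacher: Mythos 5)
Your proposal is correct and follows essentially the same route as the paper: both reduce to Proposition \ref{Lpprop} by observing that $\ell^p(E)$ is again an infinite-dimensional ${\cal L}^p$-space and transferring amenability from ${\cal B}(\ell^p(E))$ to $\ell^\infty({\cal K}(\ell^p(E)))$ via the machinery of \cite{DR}. The one step you label a ``bookkeeping exercise'' is made precise in the paper in a slightly different way than you suggest: after \cite[Theorem 2.1]{DR} yields amenability of $\ell^\infty({\cal B}(\ell^p(E)))$, one passes to $\ell^\infty({\cal K}(\ell^p(E)))$ not by a conditional-expectation-type retraction but because it is a closed ideal with a bounded approximate identity (as $\ell^p(E)$ is reflexive with the approximation property), and such ideals inherit amenability.
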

\begin{proof}
Assume towards a contradiction that ${\cal B}(\ell^p(E))$ is amenable. By \cite[Theorem 2.1]{DR}, this forces $\ell^\infty({\cal B}(\ell^p(E)))$ to be amenable as well. Since $E$ is an ${\cal L}^p$-space, so is $\ell^p(E)$; in particular, $\ell^p(E)$ is reflexive and has the approximation property, so that ${\cal K}(\ell^p(E))$ has a bounded approximate identity. Consequently, $\ell^\infty({\cal K}(\ell^p(E)))$ is an ideal in $\ell^\infty({\cal B}(\ell^p(E)))$ with a bounded approximate identity and thus an amenable Banach algebra in its own right (\cite[Proposition 2.3.3]{LoA}), which is impossible by Proposition \ref{Lpprop}.
\end{proof}
\begin{corollary} \label{nonamcor}
Let $p \in (1,\infty)$, and let $E$ be an ${\cal L}^p$-space such that $E \cong \ell^p(E)$. Then ${\cal B}(E)$ is not amenable.
\end{corollary}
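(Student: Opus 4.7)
The plan is essentially to transport the preceding theorem across the given isomorphism. The hypothesis $E \cong \ell^p(E)$ means there is a topological (bicontinuous) linear isomorphism $U \colon E \to \ell^p(E)$. Conjugation by $U$, namely the map $T \mapsto U T U^{-1}$, is a bicontinuous isomorphism of Banach algebras from ${\cal B}(E)$ onto ${\cal B}(\ell^p(E))$.

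Next I would invoke the standard fact that amenability of a Banach algebra is preserved under bicontinuous algebra isomorphisms: if $\varphi \colon \A \to \B$ is a continuous surjective homomorphism and $\A$ is amenable, then $\B$ is amenable. (This is immediate from Definition \ref{amdef}: $\varphi$ maps a bounded approximate identity to a bounded approximate identity of $\B$, and $\varphi \otimes \varphi$ sends the bounded right approximate identity of $\ker \Delta_\A$ into one for $\ker \Delta_\B$.) In particular, ${\cal B}(E)$ is amenable if and only if ${\cal B}(\ell^p(E))$ is.

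Finally I would argue by contradiction: assume ${\cal B}(E)$ is amenable; then ${\cal B}(\ell^p(E))$ is amenable. Since $E$ is an ${\cal L}^p$-space with $p \in (1,\infty)$, this contradicts the preceding theorem. Hence ${\cal B}(E)$ is not amenable.

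There is no genuine obstacle here — the corollary is a direct transfer along the isomorphism $E \cong \ell^p(E)$, and the only point to verify is the (well-known) invariance of amenability under bicontinuous Banach algebra isomorphisms.
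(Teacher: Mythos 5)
Your proof is correct and is exactly the argument the paper has in mind (the corollary is stated without proof precisely because it is this immediate transfer along the isomorphism ${\cal B}(E) \cong {\cal B}(\ell^p(E))$ induced by $E \cong \ell^p(E)$, together with the standard fact that amenability passes through continuous epimorphisms of Banach algebras). No issues.
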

\par
For any $p \in (1,\infty)$, the spaces $E = \ell^p$ and $E= L^p[0,1]$ satisfy the hypothesis of Corollary \ref{nonamcor}, so that ${\cal B}(\ell^p)$ and ${\cal B}(L^p[0,1])$ are not amenable. Since any infinite-dimensional, separable $L^p$-space is isomorphic to either $\ell^p$ or $L^p[0,1]$ (\cite[p.\ 83]{Woj}), we even obtain:
\begin{corollary} 
Let $p \in (1,\infty)$, and let $E$ be an infinite-dimensional, separable $L^p$-space. Then ${\cal B}(E)$ is not amenable.
\end{corollary}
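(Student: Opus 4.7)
The plan is to reduce the statement to Corollary \ref{nonamcor} via the classification of separable infinite-dimensional $L^p$-spaces. Concretely, I would invoke the fact cited from \cite[p.\ 83]{Woj} that any such $E$ is isomorphic to one of exactly two Banach spaces, namely $\ell^p$ or $L^p[0,1]$. This immediately dichotomizes the proof into two cases, and in both I would show that the hypothesis of Corollary \ref{nonamcor} is met.

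First I would observe that $\ell^p$ and $L^p[0,1]$ are both genuine $L^p$-spaces, hence \emph{a fortiori} ${\cal L}^p$-spaces in the sense of \cite{LP}; this is what lets me enter the framework of Corollary \ref{nonamcor}. Next I would verify the self-similarity condition $E \cong \ell^p(E)$ in each case. For $E = \ell^p$, this is the standard identification $\ell^p(\ell^p) \cong \ell^p$ arising from any bijection $\posints \times \posints \to \posints$. For $E = L^p[0,1]$, this is the equally standard identification $\ell^p(L^p[0,1]) \cong L^p(\posints \times [0,1]) \cong L^p[0,1]$, the last step using that $\posints \times [0,1]$ is a standard $\sigma$-finite measure space whose $L^p$-space is isometric to $L^p[0,1]$.

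With both hypotheses in place, Corollary \ref{nonamcor} applies directly and yields that ${\cal B}(E)$ is not amenable in either case. There is no real obstacle here: all the substantial work has already been carried out in Theorem \ref{nonamthm}, Proposition \ref{Lpprop}, and Corollary \ref{nonamcor}. The only thing to be careful about is the citation of the classification result and the elementary check of $\ell^p$-self-similarity, which is purely measure-theoretic bookkeeping.
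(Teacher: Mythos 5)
Your proposal is correct and follows exactly the paper's route: the paper likewise derives this corollary from Corollary \ref{nonamcor} applied to $\ell^p$ and $L^p[0,1]$ (both of which satisfy $E \cong \ell^p(E)$), combined with the classification from \cite[p.\ 83]{Woj} that every infinite-dimensional separable $L^p$-space is isomorphic to one of these two. The only implicit step, which you and the paper both take for granted, is that amenability of ${\cal B}(E)$ is invariant under isomorphism of the underlying Banach space.
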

\begin{remark}
If one is only interested in the non-amenability of ${\cal B}(\ell^p)$ for $p \in (1,\infty)$, a more direct route is possible: According to \cite[p.\ 73]{LT}, the isomorphism 
\begin{equation} \label{lpsum}
   \ell^p \cong \text{$\ell^p$-}\bigoplus_{n=1}^\infty \ell_n^2
\end{equation}
holds. Using, for $q \in \mathbb P$, the projection from $\ell^p$ onto $\ell^2(\Lambda_q) = \ell^2_{|\Lambda_q|}$ according to (\ref{lpsum}), then invoking Lemma \ref{estl}, and finally following more or less \cite{Oza} will also yield a proof of the non-amenability of ${\cal B}(\ell^p)$. However, unlike the argument presented here, we do not see how this way of reasoning will yield the non-amenability of ${\cal B}(E)$ for every ${\cal L}^p$-space with $E \cong \ell^p(E)$, such as $L^p[0,1]$.
\end{remark}
\renewcommand{\baselinestretch}{1.0}
\renewcommand{\baselinestretch}{1.2}
\dated
\vfill
\begin{tabbing}
\textit{Author's address}: \= Department of Mathematical and Statistical Sciences \\
\> University of Alberta \\
\> Edmonton, Alberta \\
\> Canada T6G 2G1 \\[\medskipamount]
\textit{E-mail}: \> \texttt{vrunde@ualberta.ca} \\[\medskipamount]
\textit{URL}: \> \texttt{http://www.math.ualberta.ca/$^\sim$runde/}
\end{tabbing}           
\end{document}